\theoremstyle{plain}
\newtheorem{theorem}{\bf Theorem}[section]
\newtheorem*{theorem*}{Theorem}
\newtheorem{corollary}[theorem]{\bf Corollary}
\newtheorem*{conjecture*}{\bf Conjecture}
\theoremstyle{definition}
\theoremstyle{remark}
\newtheorem{example}[theorem]{\bf Example}
\theoremstyle{example}
\def\D{\Delta}
\def \dim{{\operatorname{dim}}}
\def \HF{{\operatorname{HF}}}
\def \height{{\operatorname{ht}}}
\def \pd{{\operatorname{pd}}}
\begin{document}
\title{On a conjecture by Kalai}
\author[G. Caviglia]{Giulio Caviglia}  \thanks{The work of the first author was supported by a grant from the
Simons Foundation (209661 to G.~C.). }
\address{Department of Mathematics, Purdue University 150 N. University Street, West Lafayette, IN 47907-2067}
\email{gcavigli@math.purdue.edu}
\author[A. Constantinescu]{Alexandru Constantinescu}
\address{Institut de Math\'ematiques, Universit\`e de Neuch\^atel B\^atiment UniMail, Rue Emile-Argand 11
2000 Neuch\^atel - Switzerland}
\email{alexandru.constantinescu@unine.ch}
\author[M. Varbaro]{Matteo Varbaro}
\address{Dipartimento di Matematica,
Universit\`a degli Studi di Genova, Via Dodecaneso 35, 16146, Italy}
\email{varbaro@dima.unige.it}
\subjclass[2010]{13D40, 13F55, 05E45}
\keywords{$h$-vector \and face vector \and Cohen Macaulay simplicial complex \and flag simplicial complex, EGH conjecture, regular sequences }
\thanks{The authors thank the
Mathematical Sciences Research Institute, Berkeley CA, where this work was done,
for support and hospitality during Fall 2012  }
\date{{\small \today}}
\maketitle

\begin{abstract} We show that monomial ideals generated in degree two satisfy a conjecture by Eisenbud, Green and Harris. In particular we give a partial answer to a conjecture of Kalai by proving that $h$-vectors of flag Cohen-Macaulay simplicial complexes are
$h$-vectors of Cohen-Macaulay balanced simplicial complexes.
\end{abstract}
\maketitle
\section{Introduction}

An unpublished conjecture of Gil Kalai, recently verified by Frohmader \cite{Fr}, states that for any flag simplicial complex $\D$ there exists a balanced simplicial complex $\Gamma$ with the same $f$-vector. Here a $(d-1)$-simplicial complex is balanced if you can use $d$ colors to label its vertices so that no face contains two vertices of the same colour.
Kalai's conjecture has also a second part which is still open: If $\D$ happens to be Cohen-Macaulay (CM), then $\Gamma$ is required to be CM as well. 

In this note we show that for any CM flag simplicial complex $\D$ there exists a CM balanced simplicial complex $\Gamma$ with the same $h$-vector.
Such a result has been proved in \cite[Theorem 3.3]{CV} under the additional assumption that $\D$ is vertex decomposable. Other recent developments concerning Kalai's conjecture and related topics can be found in \cite{CN,BV}. To this purpose we will show a stronger statement, Theorem \ref{main}, namely that the \textit{Eisenbud-Green-Harris conjecture} ({\bf EGH}) holds  for quadratic monomial ideals. 

Let $S=K[x_1,\dots,x_n]$ be a polynomial ring over a field $K$. The {\bf EGH} conjecture, in the general form,  states that for every homogeneous ideal $I$ of $S$ containing a regular sequence $f_1,\dots,f_r$ of degrees $d_1\leq \cdots \leq d_r$ there exists a homogeneous ideal $J\subseteq S$, with the same Hilbert function as $I$ (i.e. ${\HF}_I={\HF}_J$) and containing $x_1^{d_1},\dots,x_r^{d_r}.$ Furthermore, by a theorem of Clements and Lindst\"om \cite{CL}, the ideal $J$, when it exists, can be chosen to be the sum of the ideal $(x_1^{d_1},\dots,x_r^{d_r})$ and a lex-segment ideal of $S$. We refer to \cite{EGH1} and \cite{EGH2} for the original formulation of this conjecture. The only large classes for which the {\bf EGH} conjecture is known are: when  $f_1,\dots,f_r$ are Gr\"obner basis (by a deformation argument), when   $d_i> \sum_{j<i}(d_j-1)$ for all $i=2,\dots,r$ (\cite{CM}) and 
when each $f_i$ factors as product of linear forms (\cite[Corollary 4.3]{Ab} for the case $r=n$, and \cite{Ab} together with \cite[Proposition 10]{CM} for the general case).

\section{The result}

\begin{theorem}\label{main}
Let $I\subseteq S=K[x_1,\ldots,x_n]$ be a monomial ideal generated in degree $2,$ of $\height I =g$. 
There exists a monomial ideal $J \in S$, such that $(x_1^2,\ldots,x_g^2)\subseteq J$ and $$\HF_I=\HF_J.$$ Furthermore $J$ can be chosen with the same projective dimension as $I.$ 
\end{theorem}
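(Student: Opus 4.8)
The plan is as follows. The first thing to pin down is that the EGH-type conclusion is the right one: since $I$ is generated by quadrics and $\height I=g$, a standard prime-avoidance argument (after harmlessly replacing $K$ by an infinite extension, which changes none of $\HF_I$, $\height I$, $\pd I$) produces a regular sequence of $g$ quadrics inside $I$, so that $x_1^2,\dots,x_g^2$ is exactly the monomial model one expects. Every quadric of $I$ lies in the span of the monomial generators, so one is tempted to pick the regular sequence to consist of products of linear forms and invoke the known case of EGH from \cite{Ab}; I am not certain this is always possible, and in any case it would not obviously deliver the projective-dimension addendum, so I would argue directly and carry $\pd$ along throughout. For the induction to close I would prove the slightly stronger statement in which the distinguished squares are indexed by the variables of any prescribed minimal prime $P$ of $I$ with $\height P=g$.

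The argument is an induction on $n$, and several easy configurations are disposed of first. If a variable $x_i$ occurs in no minimal generator of $I$, then $S/I\cong\bigl(S'/(I\cap S')\bigr)[x_i]$ with $S'=K[x_j:j\ne i]$ and the height is unchanged; if $x_i^2\in I$ and $x_i$ occurs in no other generator, then $S/I\cong\bigl(S'/(I\cap S')\bigr)\otimes_K K[x_i]/(x_i^2)$ and the height drops by one; and if $x_i$ is itself a generator of $I$ (this does not happen for the original $I$, but such linear generators appear in the recursion), then $S/I\cong\bigl(S/(x_i)\bigr)/\bar I$ and the height again drops by one, since $x_i$ lies in every minimal prime. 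In each case the Hilbert function transforms by an explicit rule, the projective dimension transforms accordingly via Auslander--Buchsbaum, and one relabels so that the distinguished squares stay aligned with the image of $P$.

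After these reductions we may assume $g\ge1$, no variable is isolated, and there is no isolated loop; relabelling so that $P=(x_1,\dots,x_g)$, the prime $P$ must contain a non-isolated variable, which I take to be $x_g$. The heart of the argument is the exact sequence
$$0\longrightarrow\bigl(S/(I:x_g)\bigr)(-1)\xrightarrow{\,\cdot x_g\,}S/I\longrightarrow S/(I,x_g)\longrightarrow 0,$$
so that $\HF_{S/I}(d)=\HF_{S/(I:x_g)}(d-1)+\HF_{S/(I,x_g)}(d)$. Modulo $x_g$ the ideal $(I,x_g)$ is a quadratic monomial ideal in $n-1$ variables in which $(x_1,\dots,x_{g-1})$ is a minimal prime of height $g-1$; and $(I:x_g)=I+(x_j:x_gx_j\in I)$, which after deleting its linear generators is a quadratic monomial ideal in fewer than $n$ variables (at least one neighbour of $x_g$ is deleted). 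Applying the inductive hypothesis to both pieces produces monomial ideals realizing the two Hilbert functions, carrying the prescribed distinguished squares, and with the prescribed projective dimensions.

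It then remains to assemble these into a single $J\subseteq S$. Because $x_g$ is one of the distinguished variables, $x_g^2=0$ in the ambient Clements--Lindström ring, so the exponent of $x_g$ is restricted to $0,1$ and the two-term Hilbert-function identity above matches precisely the recursive ``split off $x_g$'' description of monomial ideals in that ring; the gluing goes through provided the two side-models can be chosen with the nesting property that makes their combination an honest ideal — this is the Clements--Lindström phenomenon and pushes one toward lex-segment choices — and the resulting $J$ then automatically contains $(x_1^2,\dots,x_g^2)$ and has the correct Hilbert function. The projective dimension of $S/J$ is read off from those of its pieces via the depth lemma applied to the analogous sequence $0\to(S/(J:x_g))(-1)\to S/J\to S/(J,x_g)\to0$, and it agrees with $\pd_S(S/I)$ because $x_g$ lies in a minimal prime of $I$ of minimal height. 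I expect this last step to be the real obstacle: the two side-models must simultaneously have the correct Hilbert functions, the nesting that makes their combination a valid ideal, the prescribed squares in the right places (which is exactly what forces strengthening the induction to track a minimal prime), and enough slack to control the projective dimension — yet the lex-segment choices that make the first three work in general destroy the fourth. Reconciling these, together with the case analysis of when $\depth S/(I:x_g)$, $\depth S/(I,x_g)$ and $\depth S/I$ are linked by equalities rather than strict inequalities, is where the special combinatorial structure of quadratic monomial ideals — edge ideals of graphs with loops — must genuinely be exploited.
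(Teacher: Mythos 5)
There is a genuine gap, and you essentially flag it yourself: the assembly step of your induction --- producing a single ideal $J$ containing $(x_1^2,\dots,x_g^2)$ from the two side-models for $(I,x_g)$ and $(I:x_g)$ --- is never carried out. The inductive hypothesis only controls Hilbert functions, whereas the gluing requires a containment between the two models (so that their combination is an honest ideal), compatibility of the distinguished squares, and control of depth, all simultaneously; you observe that the choices which secure the first two tend to destroy the third and defer the reconciliation to ``the special combinatorial structure of quadratic monomial ideals'' without exhibiting it. A second concrete obstruction is the claim that the inductive hypothesis applies to $(I:x_g)$ ``carrying the prescribed distinguished squares'': for a squarefree monomial ideal, $I:x_g$ is the intersection of exactly those primary components \emph{not} containing $x_g$, so the prescribed minimal prime $(x_1,\dots,x_g)$ --- which contains $x_g$ --- is in general not even a prime containing $I:x_g$, and $\height(I:x_g)$ can exceed $g$; your strengthened induction statement therefore does not pass to this factor, and the squares it would place need not sit among $x_1^2,\dots,x_g^2$. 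Finally, the assertion that $\pd$ of the glued $J$ equals $\pd I$ ``because $x_g$ lies in a minimal prime of minimal height'' is not an argument: the depth lemma gives only inequalities, and the case analysis you postpone is precisely where a proof would have to live.

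The idea you set aside as ``not certain this is always possible'' is exactly the paper's proof. After arranging that $(x_1,\dots,x_g)$ is a minimal prime and writing $I_2=x_1V_1\oplus\cdots\oplus x_gV_g$ with each $V_i$ spanned by variables, one finds linear forms $\ell_i\in V_i$ such that $x_1\ell_1,\dots,x_g\ell_g$ is a regular sequence: the condition is that a certain intersection of Zariski-open subsets of $\prod_{i=1}^g V_i$ be nonempty, and each open set is shown nonempty by Hall's marriage theorem applied to a bipartite graph built from the $V_i$, with Hall's condition verified using the height hypothesis (any violation would produce a prime of height $<g$ containing $I$). One then quotes Abedelfatah's theorem (EGH for regular sequences of products of linear forms) together with the Caviglia--Maclagan reduction to obtain $J$, and the projective-dimension refinement is handled not by tracking $\pd$ through an induction but by first going modulo a maximal $S/I$-regular sequence of linear forms and then invoking the Caviglia--Sbarra result to choose $J'$ with maximal projective dimension.
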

\begin{proof} Since the Hilbert function and the projective dimension are invariant with respect to field extension, we can assume without loss of generality that $K$ is infinite
We will prove that $I$ contains a regular sequence of the form $x_1\ell_1,\ldots, x_g\ell_g$, where $\ell_i$ is a linear form for every $i\in [g]=\{1,\ldots, g\}$. Then we will infer the theorem by a result in \cite{Ab}. 

Without loss of generality we may assume that $(x_1,\ldots,x_g)$ is a minimal prime of $I$. Thus, we may decompose the degree 2 component of $I$ as
\[I_2 = x_1V_1 \oplus \ldots \oplus x_gV_g,\] 
where each $V_i$ is a linear space generated by indeterminates. 
Our goal is to find $g$ linear forms $\ell_i\in V_i$, such that:

{\center (*) for all $A\subseteq [g]$, the $K$-vector space $\langle x_i:i\in A\rangle +\langle\ell_i:i\in[g]\setminus A\rangle$ has dimension $g$.}

\vspace{3mm}
 
\noindent To see that (*) is equivalent to $x_1\ell_1,\ldots, x_g\ell_g$ being a $S$-regular sequence (from now on we will just write regular sequence for $S$-regular sequence), consider the following short exact sequence (where $C=(x_1\ell_1,\ldots, x_g\ell_g)$):
\[0\rightarrow K[x_1,\ldots ,x_n]/(C:\ell_g)(-1)\rightarrow K[x_1,\ldots ,x_n]/C\rightarrow K[x_1,\ldots ,x_n]/(C+(\ell_g))\rightarrow 0.\]
By induction on the number of variables, both the extremes of the above exact sequence are graded modules of Krull dimension $\leq n-g$. By looking at the Hilbert polynomials, $K[x_1,\ldots ,x_n]/C$ has dimension $\leq n-g$ too, that is possible only if $C$ is a complete intersection.

So we have to seek $\ell_i\in V_i$ satisfying (*). Let $A=\{i_1,\ldots ,i_a\}$ be a subset of $[g]$. We define $U_A \subseteq \prod_{i\in A}V_i$ to be the following set:
\[U_A = \{ (v_{i_1},\ldots ,v_{i_a}) \in \prod_{i\in A}V_i~:~ \langle v_{i_1},\ldots ,v_{i_a}\rangle + \langle x_j:j \in [g]\setminus A\rangle \textup{ has dimension $g$}\}.\]
As the condition of linear dependence is obtained by imposing certain determinantal relations to be zero, $U_A$ is a Zariski open set of $\prod_{i\in A}V_i.$ Thus the $\widetilde{U}_A$ below is a Zariski open set of $\prod_{i=1}^g V_i$ \[\widetilde{U}_A = U_A \times \prod_{i\in [g]\setminus A} V_i \subseteq \prod_{i=1}^g V_i.\]
This construction can be done for every $A \subseteq [g]$, and thus we can define the open set
\[U  = \bigcap_{A\subseteq[g]}\widetilde{U}_A \subset \prod_{i=1}^g V_i.\]
Any element $(\ell_1,\ldots ,\ell_g)\in U$ will  automatically satisfy (*), so our goal is to show that $U\neq \emptyset$.
As $ \prod_{i=1}^g V_i$ is irreducible, it is enough to show that all the open sets $U_A$'s are nonempty.
For any $A\subseteq[g]$ we have 

\begin{equation}
\dim_K\biggl(\sum_{i\in A} V_i + \sum_{j\in[g]\setminus A} \langle x_j\rangle\biggr) \ge g,\label{C1}
\end{equation}
otherwise $(\sum_{i\in A} V_i + \sum_{j\in[g]\setminus A} \langle x_j\rangle)$  would be a prime ideal containing $I$ of height $<g$. 

Given $A\subseteq[g]$, we  define a bipartite graph $G_A$.  The  vertex set of $G_A$ has the following partition: $V(G_A) = \{x_1,\ldots,x_n\} \cup \{1,\dots,g\}$, and  the edge set of $G_A$ is given by:
\[ \{x_i,j\}\in E(G_A) \iff \left\{\begin{array}{lcl} x_i \in V_j&,&\textup{if~} j \in A\\
i=j&,&\textup{if~} j \notin A
\end{array}
\right.\]
We fix $A$ and prove that $G_A$ satisfies the hypothesis of the Marriage Theorem. 
For a subset  $B\subseteq V(G_A)$, we denote by $N(B)$ the set of vertices adjacent to some vertex in $B$. 
Choose now  $B \subseteq \{1,\ldots,g\}$.
By applying \eqref{C1} to the set $A\cap B \subseteq [g]$ we can deduce that 
\[\dim_K\biggl(\sum_{i\in A\cap B} V_i + \sum_{j\in([g]\setminus A)\cap B} \langle x_j\rangle\biggr) \ge |B|,\]
 
Furthermore notice that the dimension of the above vector space is $|N(B)|$, thus we can apply the Marriage Theorem and infer the existence of a matching in $G_A$ of the form $\{x_{i_j},j\}_{j\in [g]}$. Therefore $U_A$ is nonempty as it contains  $(x_{i_j} : j\in A).$

So we found a regular sequence of quadrics $f_1,\ldots ,f_g$ in $I$ consisting of products of linear forms.

Let $\pd(I)$ be the projective dimension of $I$ and assume that $\pd(I)=p-1$. By applying a linear change of coordinates, we may assume that $x_{p+1},\ldots ,x_n$ is a $S/I$-regular sequence. Going modulo $(x_{p+1},\dots,x_n)$, the image $I'\subseteq K[x_1,\ldots ,x_p]$ of $I$ may not be monomial, but still contains a regular sequence  of quadrics which are products of linear forms, namely the image of $f_1,\ldots ,f_g.$ So we find $J'\subseteq K[x_1,\ldots ,x_p]$ containing $(x_1^2,\ldots ,x_g^2)$ with the same Hilbert function of $I'$ by \cite[Corollary 4.3]{Ab} and \cite[Proposition 10]{CM}. Clearly  $\pd(J')\leq p -1$, but we can actually choose $J'$ such that $\pd(J')=p -1$ by \cite[Theorem 4.4]{CS}. Defining $J=J'K[x_1,\ldots ,x_n]$ we have $(x_1^2,\ldots ,x_g^2)\subseteq J$, $\pd(J)= \pd(I)$ and $\HF_I=\HF_J.$
\end{proof}

The following example shows that the above proof cannot be extended to prove EGH for all monomial ideals.

\begin{example}
The ideal $I = (x_1^2x_2, x_2^2x_3, x_1x_3^2) \subseteq K[x_1,x_2,x_3]$ does not contain a regular sequence of the form $\ell_1\ell_2\ell_3, q_1q_2q_3$, where all $\ell_i$ and $q_j$ are linear forms. 
Elementary direct computations allow one  to see that the generators are  the only products of three linear forms, which are contained  in $I$. Clearly any choice of two of them does not produce a regular sequence.
\end{example}

The following corollary is the main motivation for this note.

\begin{corollary} 
For any CM flag simplicial complex $\D$ there exists a CM balanced simplicial complex $\Gamma$ with the same $h$-vector.
\end{corollary}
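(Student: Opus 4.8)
The plan is to translate the statement about simplicial complexes into a statement about the Stanley--Reisner rings and then apply Theorem \ref{main}. Let $\D$ be a CM flag simplicial complex on vertex set $[n]$, and let $K[\D] = S/I_\D$ be its Stanley--Reisner ring, where $S = K[x_1,\dots,x_n]$. Since $\D$ is flag, the ideal $I_\D$ is generated in degree $2$ (its minimal generators are the monomials $x_ix_j$ corresponding to the non-edges of the $1$-skeleton). Let $d-1 = \dim \D$, so that $\height I_\D = n-d$. Put $g = n-d$. Theorem \ref{main} produces a monomial ideal $J \subseteq S$ with $(x_1^2,\dots,x_g^2) \subseteq J$, $\HF_{I_\D} = \HF_J$, and $\pd(J) = \pd(I_\D)$.

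Next I would record the standard homological translation. Because $\D$ is CM, $S/I_\D$ is a Cohen--Macaulay ring, so by the Auslander--Buchsbaum formula $\pd(S/I_\D) = n - \depth(S/I_\D) = n - \dim(S/I_\D) = n - d = g$. Hence $\pd(I_\D) = g-1$, and therefore $\pd(S/J) = g$ as well. Since $\dim(S/J) = \dim(S/I_\D) = d$ (equal Hilbert functions give equal Krull dimension), Auslander--Buchsbaum applied to $S/J$ gives $\depth(S/J) = n - \pd(S/J) = n - g = d = \dim(S/J)$, so $S/J$ is Cohen--Macaulay. Now $J$ is a monomial ideal containing the monomials $x_i^2$ for $i \in [g]$; these squarefree-free generators are exactly what is needed to pass to a balanced complex. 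The key observation is that, since $x_1^2,\dots,x_g^2 \in J$, polarizing $J$ (or rather, applying the construction that replaces each $x_i^2$ by $x_ix_i'$ and splits variables accordingly) yields a squarefree monomial ideal $I_\Gamma$ in a polynomial ring whose quotient is CM of the same dimension, and whose associated complex $\Gamma$ is balanced: the color classes are precisely the groups of variables arising from the same original $x_i$. One should cite here the precise form of this passage --- essentially the argument of \cite[Theorem 3.3]{CV} or the classical observation (going back to Björner--Frankl--Stanley and used in \cite{CV}) that an order ideal of monomials closed under the relevant operation and containing all $x_i^2$ determines a balanced CM complex with $h$-vector equal to the Hilbert function numerator.

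Finally I would verify the $h$-vector matches. The $h$-vector of a CM complex of dimension $d-1$ is determined by the Hilbert series of $K[\D]$ modulo a linear system of parameters: $\HS_{K[\D]/(\theta_1,\dots,\theta_d)}(t) = \sum_{i=0}^{d} h_i t^i$. Since $\HF_{I_\D} = \HF_J$ and both quotients are CM of the same dimension, the $h$-vector of $\Gamma$ (computed the same way for the balanced complex, where a linear system of parameters exists because $K$ is infinite --- and one may extend $K$ harmlessly since $h$-vectors are unchanged) equals the $h$-vector of $\D$. This completes the proof. The main obstacle, and the step I would be most careful about, is making the polarization-to-balanced-complex passage precise: one must check that the specific monomial ideal $J$ furnished by Theorem \ref{main} (which by the Clements--Lindström part of the discussion may be taken to be $(x_1^2,\dots,x_g^2)$ plus a lex-segment) polarizes to a genuine Stanley--Reisner ideal whose complex is balanced with the color classes as described, and that Cohen--Macaulayness is preserved under this polarization --- all of which is exactly the content already established in \cite{CV} for the vertex-decomposable case, now available unconditionally because we no longer need vertex decomposability to produce $J$.
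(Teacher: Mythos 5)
Your overall strategy coincides with the paper's: apply Theorem \ref{main}, deduce that $S/J$ is Cohen--Macaulay from the equality of projective dimensions via Auslander--Buchsbaum (a detail the paper leaves implicit and which you handle correctly), and then polarize to produce the balanced complex; your $h$-vector comparison at the end is also fine, since equal Hilbert functions plus Cohen--Macaulayness of both quotients give equal $h$-polynomials, and polarization preserves them.

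The gap --- which you yourself flag as ``the step I would be most careful about'' --- is exactly the step that needs an argument and does not get one: why the polarization of $J$ is balanced with color classes given by the fibers over the original variables. For that coloring to witness balancedness, the number of color classes must equal $\dim\Gamma+1$, so polarization must introduce \emph{exactly} $g$ new vertices, one for each of $x_1,\dots,x_g$ and none for $x_{g+1},\dots,x_n$. A priori the monomial ideal $J$ produced by Theorem \ref{main} could have a minimal generator divisible by $x_j^2$ for some $j>g$, or of higher degree, which would break this count. The missing observation, which is the actual content of the paper's proof, is: since $S/J$ is CM, $J$ is unmixed; every minimal prime of $J$ contains $x_1,\dots,x_g$ (because $x_i^2\in J$) and $\height J=g$, so $(x_1,\dots,x_g)$ is the unique associated prime; hence $J$ is extended from an Artinian monomial ideal $J'\subseteq K[x_1,\dots,x_g]$ whose minimal generators are $x_1^2,\dots,x_g^2$ together with squarefree monomials. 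Polarizing $J'$ then splits each $x_i$ into exactly two vertices forming a non-face, yielding a $(g-1)$-dimensional CM complex balanced by these $g$ pairs, with $h$-vector $\HF_{K[x_1,\dots,x_g]/J'}$, i.e.\ the $h$-vector of $\D$. (Your variant of polarizing $J$ inside all of $S$ then just produces the cone over this complex with $n-g$ apexes, which also works --- but only once the primary-decomposition fact is in place.) Your suggested fallback of taking $J$ to be $(x_1^2,\dots,x_g^2)$ plus a lex-segment does not obviously help: that normalization is not what Theorem \ref{main} delivers once one also insists on $\pd(J)=\pd(I)$, and a lex-segment in $n$ variables need not be supported on $x_1,\dots,x_g$. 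The unmixedness argument is the clean way through.
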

\begin{proof} 
Let $g$ be the height of the Stanley-Reisner ideal $I_{\Delta}.$ By Theorem \ref{main}, there exists a CM ideal $J\subseteq S$,  containing $(x_1^2, \ldots, x_g^2)$ and  with the same Hilbert function as $I_{\D}.$ Since $J$ is unmixed, it has to be the extension to $S$ of a monomial ideal $J'\subseteq  K[x_1,\ldots ,x_g].$ Hence
$\HF_{K[x_1,\ldots ,x_g]/J'}$ equals  the $h$-vector of $\Delta.$ The CM balanced $\Gamma$ is the simplicial complex associated to the polarization of $J'$.
\end{proof}

\end{document}